\newtheorem{definition}{Definition}
\newtheorem{theorem}{Theorem}
\newtheorem{corollary}{Corollary}
\newtheorem{lemma}{Lemma}
\newtheorem{assertion}{Assertion}
\newtheorem{example}{Example}
\newtheorem{remark}{Remark}
\newcommand {\cF}   {\mathcal{F}}
\newcommand {\cV}   {\mathcal{V}}
\newcommand {\bbZ}  {\mathbb{Z}}
\newcommand {\bbR}  {\mathbb{R}}
\DeclareMathOperator   {\Aut}     {Aut}
\DeclareMathOperator   {\sgrad}   {sgrad}
\DeclareMathOperator   {\Sym}     {Sym}
\author{I.\,K.~Kozlov\thanks{No Affiliation,  E-mail: {\tt ikozlov90@gmail.com}} \quad  and \quad  A.\,A.~Oshemkov\thanks{Faculty of Mechanics and Mathematics, Moscow State University,
Moscow, 119991 Russia,  E-mail: {\tt a@oshemkov.ru} }}
\title{Classification of saddle-focus singularities}
\date{}
\begin{document}

\maketitle

\begin{abstract}
The paper presents an algorithm for topological classification of nondegenerate saddle-focus singularities of integrable Hamiltonian systems
with three degrees of freedom up to semi-local equivalence. In particular, we prove that any singularity of saddle-focus type 
can be represented as an almost direct product in which the acting group is cyclic. Based on the constructed algorithm, a complete list 
of singularities of saddle-focus type of complexity $1$, $2$, and $3$, i.e., singularities whose leaf contains one, two, or three singular 
points of rank $0$, is obtained. Earlier both singularities of saddle-focus type of complexity~$1$ were also described by L.\,M.~Lerman.
\end{abstract}

\begin{flushleft}
\textit{Keywords and phrases:} integrable system, Liouville foliation, saddle-focus singularity.

\textit{Bibliography:} 18 titles.

\end{flushleft}

\section{Introduction}

We study topological properties of singularities of integrable Hamiltonian systems in a neighborhood of a singular leaf.
More precisely, we consider saddle-focus singularities for systems with three degrees of freedom up to semi-local Liouville equivalence (i.~e., \ up to a leafwise homeomorphism of Liouville foliations in a neighborhood of a singular leaf). In particular, the paper presents an algorithm for constructing a complete list of saddle-focus singularities of given complexity $k$, i.e.\ with $k$ singular points of rank $0$ on the leaf (see Section~\ref{S:AlgAndComp1and2}).
Based on this algorithm, we obtain a classification of singularities of the saddle-focus type of complexity $\le3$
(see Theorems~\ref{Th:MainComp1},~\ref{Th:MainComp2},~\ref{Th:MainComp3}).

All necessary information about integrable systems and nondegenerate singularities can be found in \cite{BolsFom} or \cite{BolsOsh06}. We only briefly give the necessary definitions.

An {\em integrable Hamiltonian system} with $n$ degrees of freedom is a $2n$-dimensional symplectic manifold $(M^{2n},\omega)$ with $n$ functions $f_1,\dots,f_n$ defined on it such that the Hamiltonian vector fields $\sgrad f_i$ are complete, pairwise commute and are linearly independent almost everywhere on $M^{2n}$. The mapping $\Phi=(f_1,\dots,f_n):M\to\bbR^n$ is called the {\em momentum mapping} of an integrable Hamiltonian system $(M^{2n},\omega,f_1,\dots,f_n)$.

Note that the definition of a Hamiltonian system usually includes a {\em Hamiltonian}~--- some distinguished function $H$
from the set $f_1,\dots,f_n$ (or a function of $f_i$), whose Hamiltonian flow defines a dynamical system on the phase space $M^{2n}$. For us the choice of a Hamiltonian is not important, since we are not interested in the dynamics, but only in the topology of the foliation generated by the first integrals
$f_1,\dots,f_n$, which is defined as follows.

\begin{definition} 
The decomposition of the phase space of an integrable Hamiltonian system into connected components of $\Phi^{-1}(y)$ (i.e.\ the connected components of the common level surfaces of first integrals $f_1,\dots,f_n$) is called the {\em Liouville foliation} corresponding to this system.
\end{definition}

\begin{definition} 
Two integrable Hamiltonian systems on $U_1$ and $U_2$ are called {\em topologically equivalent} (or {\em Liouville equivalent}), if there exists a homeomorphism $\Psi\colon U_1\to U_2$ that maps each leaf of the Liouville foliation on $U_1$ to a leaf of the Liouville foliation on $U_2$.\end{definition}

Usually the Liouville foliation is not locally trivial and has singularities. In this paper, we consider Liouville foliations such that all their singularities are nondegenerate. These singularities are generic singularities for integrable systems. In fact, they are a multidimensional symplectic analogue of ``Morse singularities''. We present only a characteristic property
for nondegenerate singularities (Theorem~\ref{Th:Eliasson}), which can be taken as a definition
(See \cite{BolsFom} or \cite{BolsOsh06} for a precise definition).

There are three types of simplest ``basic'' nondegenerate singularities, they are given by the following Liouville foliations:
\begin{itemize}
\item $E$ is the Liouville foliation defined in a neighborhood of zero in $(\bbR^2,dp\wedge dq)$ by the function $p^2 +q^2$ (elliptic type);
\item $H$ is the Liouville foliation defined in a neighborhood of zero in $(\bbR^2,dp\wedge dq)$ by the function $pq$ (hyperbolic type);
\item $F$ is a Liouville foliation defined in a neighborhood of zero in $(\bbR^4,dp\wedge dq)$ by commuting functions $p_1q_1+p_2q_2$ and $p_1q_2-q_1p_2$
(focus-focus type).
\end{itemize}

If a Liouville foliation is given by functions $f_1,\dots,f_n$, then its singular points are those where the subspace generated by $\sgrad f_i$
has dimension less than~$n$. The {\em rank} of a singular point is the dimension of this subspace. In particular, the singular points of these foliations
Liouville $E$, $H$ and $F$ have rank~$0$.

\begin{theorem}[Eliasson; see\cite{Eliasson90, BolsFom}] \label{Th:Eliasson}
In a neighbourhood of a nondegenerate singular point of rank $r$, the Liouville foliation is locally leafwise symplectomorphic to the direct product of $k_e$ copies of the foliation $E$, $k_h$ copies of the foliation $H$, and $k_f$
copies of the foliation $F$ , as well as the trivial foliation $\bbR^r\times\bbR^r$.
\end{theorem}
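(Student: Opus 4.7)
The plan is to decompose the problem into a linear algebra step (Williamson normal form of the quadratic parts) and a lifting step (promoting the formal normal form to a genuine symplectomorphism), after first splitting off the regular directions so that only a rank $0$ situation remains.

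First I would handle the rank $r$ part via the Darboux--Carath\'eodory theorem. After possibly relabelling, one can assume $\sgrad f_1,\dots,\sgrad f_r$ are linearly independent at the singular point; since they pairwise commute, Darboux--Carath\'eodory provides local symplectic coordinates $(p_1,\dots,p_n,q_1,\dots,q_n)$ in which $f_i=p_i$ for $i\le r$. The remaining integrals $f_{r+1},\dots,f_n$ Poisson-commute with $p_1,\dots,p_r$, hence are independent of $q_1,\dots,q_r$. This exhibits the trivial factor $\bbR^r\times\bbR^r$ and reduces the problem to the rank $0$ case on the symplectic slice of dimension $2(n-r)$.

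Next, at a rank $0$ nondegenerate singular point all $\sgrad f_i$ vanish, so the linearizations $A_i$ of $\sgrad f_i$ lie in the symplectic Lie algebra $\sp(2(n-r),\bbR)$ and pairwise commute. Nondegeneracy is exactly the statement that $A_1,\dots,A_{n-r}$ span a Cartan subalgebra. I would then invoke Williamson's classification theorem: any Cartan subalgebra of $\sp(2m,\bbR)$ is conjugate, via a linear symplectic transformation, to a sum of standard blocks of three types -- elliptic ($p^2+q^2$), hyperbolic ($pq$), and focus-focus (the pair $p_1q_1+p_2q_2$, $p_1q_2-p_2q_1$). This yields integers $k_e,k_h,k_f$ with $k_e+k_h+2k_f=n-r$ and gives the desired normal form at the level of the quadratic Taylor jets of the $f_i$.

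The main obstacle, and the actual content of Eliasson's theorem, is the lifting step: to show that this \emph{linear} (equivalently, quadratic) normal form can be realized by a genuine smooth symplectomorphism carrying the foliation, not only its $2$-jet, to the product foliation. I would attack this by a Moser-type deformation argument. Writing the momentum map as the sum of its quadratic part $Q$ and a higher-order remainder $R_t=tR$, one seeks a time-dependent family of symplectomorphisms $\Phi_t$ with $\Phi_t^*(Q+tR)$ constant modulo the sheaf of functions of the normal form. Differentiating leads to cohomological equations of the type $\{Q_i,g\}=h_i$ that must be solved for generating functions $g$; solvability in the smooth category reduces to a flatness/division problem, which is straightforward for the elliptic and hyperbolic blocks but genuinely delicate for the focus-focus blocks (here one uses the $S^1$-symmetry produced by the imaginary eigenvalues, together with a Borel-type extension to handle the flat remainder). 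The elliptic and hyperbolic factors had been treated earlier (R\"ussmann, Vey), and Eliasson's contribution was precisely to push the argument through in the presence of focus-focus blocks in the $C^\infty$ category; I would cite \cite{Eliasson90} for the completion of this step.
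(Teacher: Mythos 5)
The paper does not prove this statement at all: Theorem~\ref{Th:Eliasson} is quoted as an external result (Eliasson's theorem, with references to \cite{Eliasson90} and \cite{BolsFom}) and is explicitly offered as a ``characteristic property which can be taken as a definition'' of nondegeneracy. So there is nothing in the paper to compare your argument against line by line; I can only assess your sketch against the standard proof in the literature, and it follows that standard route correctly. The three-stage decomposition is the right one: Darboux--Carath\'eodory (or the local normal form for a free local $\bbR^r$-action) to split off the trivial factor $\bbR^r\times\bbR^r$ and reduce to rank $0$; Williamson's classification of Cartan subalgebras of $\mathfrak{sp}(2m,\bbR)$ to get the linear model with the count $k_e+k_h+2k_f=n-r$; and then the genuinely hard analytic step of promoting the quadratic normal form to a smooth leafwise symplectomorphism.

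Two cautions on the last step, which is where all the content lives and where your sketch is necessarily only an outline. First, your remark that the cohomological equation $\{Q_i,g\}=h_i$ is ``straightforward'' for hyperbolic blocks is too optimistic: in the $C^\infty$ category the division problem for $pq$ is obstructed by flat functions along the separatrices, and this case requires real work (this is already visible in Vey's analytic treatment versus the smooth one). Second, it is worth knowing that Eliasson's 1990 paper, which you cite for the completion, gives full details only in the purely elliptic case; complete published smooth proofs covering hyperbolic and focus-focus blocks appeared later (Miranda--Zung for the general nondegenerate case, V\~u Ng\d{o}c and Wacheux, Chaperon for focus-focus). None of this invalidates your outline --- it is the correct strategy and correctly identifies where the difficulty sits --- but if you were writing this proof out in full, the lifting step is not a routine Moser argument and cannot be dispatched in a paragraph.
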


The triple $(k_e,k_h,k_f)$ is called the {\em type} of a nondegenerate singular point. It is easy to see that the type of a singular point is uniquely defined.

A {\em singularity} of a Liouville foliation is the germ of the momentum map on a singular leaf, i.e.\ a leaf containing  singular points. A singularity is called {\em nondegenerate} (in particular, elliptic, hyperbolic, focus) if all its singular points are nondegenerate (and their type is elliptical, hyperbolic or focus, respectively).

The {\em rank of a singularity} is the minimum rank of points on it. For the singularities of almost direct product type considered in this paper (see Definition~\ref{Def:AlmostDirProd}) the types of all its minimum rank points are the same. This type $(k_e,k_h,k_f)$ is called the {\em type of a singularity}.

Let $W_1,\dots,W_l$ each be either a foliation without singularities or the simplest singularity: elliptic, hyperbolic, or focus-focus. Their product $W_1\times\dots\times W_l$ has a natural Liouville foliation with a singularity. This singularity is called a {\em direct product} singularity, and the singularities that are Liouville equivalent to it are singularities {\em of direct product type}.

\begin{definition} \label{Def:AlmostDirProd}  
An {\em almost direct product} of singularities is a quotient of a direct product singularity $W_1\times\dots\times W_l$
by an action $\psi$ of a finite group $G$ that satisfies the following conditions:
\begin{enumerate}
\item
the action $\psi$ on $W_1\times\dots\times W_l$ is component-wise, i.e.
$$
\psi(g)(x_1,\dots,x_l)=(\psi_1(g)(x_1),\dots,\psi_l(g)(x_l)),
$$
where $\psi_1,\dots,\psi_l$ are actions of the group $G$ on singularities $W_1,\dots,W_l$,
\item
the action on each factor $\psi_i(g):W_i\to W_i$ is a symplectomorphism that preserves the functions defining the Liouville foliation (and, in particular, the foliation itself),
\item
the action $\psi$ on $W_1\times\dots\times W_l$ is free.
\end{enumerate}
\par\noindent
A singularity that is Liouville equivalent to an almost direct product $(W_1\times\dots\times W_l)/G$
is called a singularity of {\em almost direct product type}.
\end{definition}

\begin{remark} \label{Rem:Zung}
Below we will consider only singularities of almost direct product type, since N.\,T.~Zung proved in \cite{Zung1} that
that any nondegenerate singularity that satisfies the non-splitability condition (which means that for each singular point
of minimum rank on a singular leaf its local bifurcation diagram coincides with the bifurcation diagram of the entire singularity),
is a singularity of almost direct product type. Most of the singularities that occur in integrable systems
in mechanics and physics are non-splittable (see~\cite{BolsFom}). However, singularities not
satisfying the non-splitability condition arise in systems that are invariant with respect to rotation (see, for example, \cite{KudrOsh20} as well as the references cited therein). Also one can construct ``artificial'' examples of ``splitable'' singularities (see~\cite{BolsOsh06}).

Note that Zung's theorem is topological in nature, i.e., almost direct products of singularities that are Liouville equivalent  need not be leafwise symplectomorphic. Since in this paper we consider singularities only up to Liouville equivalence, the singularities of almost direct product type will sometimes be called simply almost direct products.
\end{remark} 

There are many works in which the singularities of integrable Hamiltonian systems (or Liouville foliations) were studied from different points of view: a study of specific integrable systems, invariants of singularities (topological, smooth, symplectic), classification of singularities of some fixed type. Since the local classification of nondegenerate singularities is described by the Eliasson theorem (Theorem~\ref{Th:Eliasson}), here, when we speak about the classification of singularities, we usually mean their \textit{semi-local classification}, i.e.,\ up to leafwise homeomorphism
of Liouville foliations in a neighborhood of a singular leaf.

Note also that there is a well-developed theory describing the ``global'' properties of Liouville foliations (and in particular their singularities) on isoenergetic manifolds of integrable Hamiltonian systems with two degrees of freedom. This theory was developed in the works of A.\,T.~Fomenko (see \cite{Fom86, Fom88}). Its detailed exposition is contained in the book~\cite{BolsFom}.

Let us list the main results concerning the classification of nondegenerate singularities of rank $0$ from the semi-local point of view.
\begin{itemize}
\item
Singularities of type $(1,0,0)$ and $(0,1,0)$ of rank $0$ are well known. Their properties are explored in detail in \cite{BolsFom}
(see also the description of these singularities in terms of atoms in~\cite{BMF} and in terms of $f$-graphs in~\cite{OshMiran}).
In particular, there is a unique elliptic $2$-atom~$A$ corresponding to a minimax singularity, and all other $2$-atoms are saddle.
For example, in Fig.~\ref{Fig:AtomsAandB} and \ref{Fig:SaddleComp2Atoms} all $2$-atoms of complexity $1$ and $2$ are shown schematically
with their standard notation.

\item
A complete list of saddle-saddle type singularities (i.e. \ singularities of type $(0,2,0)$ and rank~$0$) has been described and studied in the works of L.\,M.~Lerman, Ya.\,L.~Umansky~\cite{LermanUmankii92}, A.\,V.~Bolsinov~\cite{Bols91}, V.\,S.~Matveev~ \cite{Matveev96}. It turns out that there are $4$ and $39$ topologically distinct saddle-saddle singularities for complexity $1$ and $2$, respectively.

\item
The complete answer in the purely saddle case, i.e., for singularities of type $(0,n,0)$ and rank $0$, was obtained by
A.\,A.~Oshemkov~\cite{Oshemkov10}. For example, there are $32$ singularities of type $(0,3,0)$ and rank $0$  for complexity $1$ (see\ \cite{Oshemkov11}).

\item
The classification of singularities of type $(0,0,m)$ and rank $0$ for an arbitrary complexity (i.e.\ in the purely focus case) was done by A.\,M.~Izosimov~\cite{Izosimov11}.

\item
Singularities of saddle-focus type of complexity $1$ were described by L.\,M.~Lerman (see\ \cite{Lerman2000}).

\end{itemize}

\begin{figure} 
\hbox to\hsize{%
\quad
\includegraphics[width=0.28\textwidth]{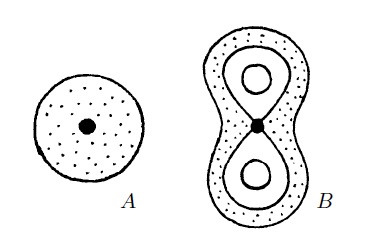}%
\hfill
\includegraphics[width=0.59\textwidth]{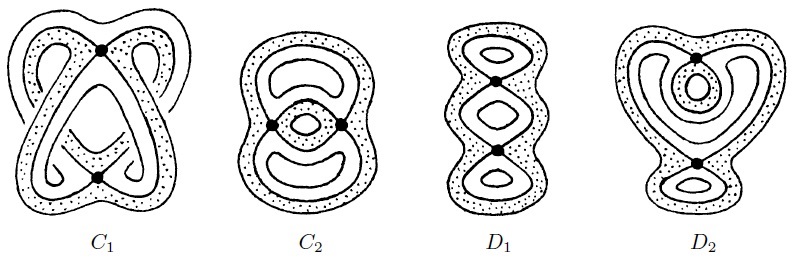}%
\quad}
\hbox to\hsize{%
\quad
\parbox{0.30\textwidth}{\caption{Atoms of complexity 1} \label{Fig:AtomsAandB}}%
\hfill
\parbox{0.59\textwidth}{\caption{Atoms of complexity $2$} \label{Fig:SaddleComp2Atoms}}%
\quad}
\end{figure}

In this paper, we will consider singularities of saddle-focus type (i.e., of type $(0,1,1)$) and rank~$0$. More precisely
(according to Remark~\ref{Rem:Zung}) we will consider singularities of almost direct product type
$(\cV_m\times\cF_n)/G$, where $\cV_m$ is some saddle $2$-atom of complexity~$m$, and $\cF_n$ is a focus singularity of complexity~$ n$. Following~\cite{Zung1}, an almost direct product will be called a {\em minimal model} of a saddle-focus singularity, if any (non-identity) element of $g\in G$ acts non-trivially on each component of the product $\cV_m\times\cF_n$. Any almost direct product can be reduced to its minimal model by taking the quotient of each factor by the action of the subgroup that trivially acts on another factor.

In N.\,T.~Zung's article~\cite{Zung1} it was stated that the minimal model for a singularity of almost direct product type is unique. Generally speaking, this is not true.

\addtocounter{example}{1}
\begin{example} 
Consider the $2$-atoms $B,D_1$ (see\ Fig.~\ref{Fig:AtomsAandB} and \ref{Fig:SaddleComp2Atoms}) and the regular Liouville foliation over the line
$W_{\mathrm{reg}}\approx S^1\times\bbR$. The $3$-atoms  $B\times S^1$ and $(D_1 \times S^1)/\bbZ_2$  corresponding to these $2$-atoms are Liouville equivalent (see~\cite{BolsFom}). The singularities of $B\times W_{\mathrm{reg}}$ and $(D_1\times W_{\mathrm{reg}})/\bbZ_2$ are also Liouville equivalent, although they are minimal models.
\end{example}

The statement about the uniqueness of the minimal model is true for some classes of singularities (for example, for elliptic or saddle
singularities of rank~$0$ (see~\cite{Oshemkov10}). However, as the following example shows, for saddle-focus singularities the minimum model is also not unique.

\begin{example} \label{Ex:SaddleFocusEquivModels} 
Minimal models $B\times\cF_1$ and $(D_1\times\cF_1)/\bbZ_2$, where the group $\bbZ_2$ acts on $\cF_1$ along the periodic integral as a shift of half a period are Liouville equivalent.
\end{example}

A more general statement (Example~\ref{Ex:SaddleFocusEquivModels} is a special case of it) is proved below in Lemma~\ref{L:HamActFact}.

We will prove that we can always ``get rid of'' such actions that lie in the continuous component of the automorphism group of focus singularities. Namely, for saddle-focus singularities we introduce the notion of a simple minimal model (see \ Definition~\ref{Def:SimpleMinModel})
and prove the following (see \ Theorems~\ref{Th:SimpleMinModelExists} and \ref{Th:SimpleMinModelUnique}):

\begin{itemize}
\item any singularity is Liouville equivalent to some simple minimal model;
\item
moreover, from any almost direct product $(\cV_m\times\cF_n)/G$ one can obtain a simple minimal model by taking the quotient by the action of some subgroup $N\triangleleft G$;
\item
for a simple minimal model the group $G$ is always cyclic;
\item
for a simple minimal model its factors $\cV_m$ and $\cF_n$, as well as the order of the group~$G$, are uniquely determined.
\end{itemize}

\section{A simple minimal model of a saddle-focus singularity}

\subsection{Existence of a simple minimal model}

\begin{definition} \label{Def:SimpleMinModel} 
We will call an almost direct product $(\cV_m\times\cF_n)/G$ {\em a simple minimal model} if any (non-trivial)
element $g\in G$
\begin{enumerate}
\item \label{Item:SimpleMinModelOldCond}
acts nontrivially on $\cV_m$ and $\cF_n$,
\item \label{Item:SimpleMinModelNewCond}
has no fixed points of rank $0$ on $\cF_n$.
\end{enumerate}
\end{definition}

This definition of a simple minimal model differs from the definition of a minimal model in N.\,T.~Zung's~\cite{Zung1}
by the addion of condition~(\ref{Item:SimpleMinModelNewCond}).

\begin{theorem} \label{Th:SimpleMinModelExists} 
Consider a singularity of saddle-focus type that is an almost direct product $(\cV_m\times\cF_n)/G$.
Let $N\subset G$ be the subgroup generated by elements that act trivially on the saddle component or
have fixed points on the focus component. Then
\begin{enumerate}
\item \label{Item:MainThGroup} the subgroup $N$ is normal and $G/N\cong\bbZ_k$,
\item \label{Item:MainThQuot}
$(\cV_m\times\cF_n)/N$ is leafwise symplectomorphic to the direct product $\cV'_{m'}\times\cF_{n'}$, where $V'_{m'}=V_m/N$,
and $n'$ is the number of orbits of the action of the subgroup~$N$ on the set of points of rank~$0$ of the focus singularity~$\cF_n$,
\item \label{Item:MainThSimMinModel}
$(\cV'_{m'}\times\cF_{n'})/\bbZ_k$ if a simple minimal model for the singularity $(\cV_m\times\cF_n)/G$.
\end{enumerate}
\end{theorem}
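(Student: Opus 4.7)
Setting $\psi=(\psi_1,\psi_2)$ componentwise, let $K=\ker\psi_1$ and let $F$ denote the set of $g\in G$ with a rank-$0$ fixed point of $\psi_2(g)$ on $\cF_n$. For normality, $K$ is a normal subgroup by construction, and $F$ is stable under $G$-conjugation: if $\psi_2(g)(p)=p$ then $\psi_2(hgh^{-1})$ fixes $\psi_2(h)(p)$, which is again of rank~$0$. Hence $N=\langle K\cup F\rangle$ is normal in~$G$.

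To identify $G/N$ as cyclic, I would appeal to the structural description of $\Aut(\cF_n)$: its identity component (among leafwise symplectomorphisms) is the Hamiltonian $S^1$-action of the periodic integral, and the finite quotient $\Gamma_n=\Aut(\cF_n)/S^1$ is cyclic and acts faithfully on the set of $n$ rank-$0$ points of $\cF_n$. In particular, any automorphism fixing a single rank-$0$ point lies in the $S^1$-component and therefore fixes all of them, so $F=\psi_2^{-1}(S^1)$ is already a normal subgroup. Freeness of the $G$-action forces $K\cap F=\{e\}$, since otherwise a non-identity element would fix an entire slice $\cV_m\times\{p\}$; thus $N=K\cdot F$, and the induced injection $G/F\hookrightarrow\Gamma_n$ shows $G/F$ is cyclic, hence so is $G/N$, say of order~$k$.

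For the direct-product decomposition, quotienting first by $K$ is immediate: $K$ is trivial on $\cV_m$ and, by freeness, acts on $\cF_n$ without rank-$0$ fixed points, so $(\cV_m\times\cF_n)/K=\cV_m\times(\cF_n/K)$, where $\cF_n/K$ is a focus singularity whose rank-$0$ points correspond to $K$-orbits. The residual $F$-action on this product has its focus component inside the induced $S^1$-flow, and here I would invoke Lemma~\ref{L:HamActFact} proved later in the paper: it shows that a free action whose focus component is Hamiltonian can be absorbed by the $S^1$-flow, yielding a direct-product quotient. This produces $(\cV_m\times\cF_n)/N=(\cV_m/\psi_1(F))\times(\cF_n/K)=\cV'_{m'}\times\cF_{n'}$ with $\psi_1(N)=\psi_1(F)$ (since $\psi_1(K)$ is trivial) and $n'$ equal to the number of $N$-orbits of rank-$0$ points (which coincides with the number of $K$-orbits, because $F$ fixes every rank-$0$ point).

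Finally, the residual $\bbZ_k=G/N$ action on $\cV'_{m'}\times\cF_{n'}$ is a simple minimal model by construction: for any non-identity class $gN$, the hypothesis $g\notin K$ makes $\psi_1(g)$ nontrivial on $\cV'_{m'}$, and $g\notin F$ yields a fixed-point-free permutation of the rank-$0$ points of $\cF_{n'}$---fixing a $K$-orbit $[p]_K$ would require $\psi_2(g)(p)=\psi_2(k)(p)$ for some $k\in K$, hence $k^{-1}g\in F$ and $g\in KF=N$, a contradiction. The main obstacle I anticipate is the direct-product step: extracting a genuine leafwise symplectomorphism from the $S^1$-straightening must be carried out carefully, and the structural input that $\Aut(\cF_n)/S^1$ is cyclic and acts faithfully on the rank-$0$ points is itself a nontrivial fact that I would take from the classification of purely focus singularities.
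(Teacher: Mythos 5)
Your proof follows essentially the same route as the paper: after splitting $N=K\cdot F$ you dispose of $K$ trivially and of $F$ via Lemma~\ref{L:HamActFact}, and the cyclicity of $G/N$ comes from the same structural description of $\Aut(\cF_n)$ (Assertion~\ref{A:AutFocusGroup} and its corollaries). One small correction: $\Aut(\cF_n)/S^1$ is neither finite nor faithful on the rank-$0$ points (the Hamiltonian subgroup $H_f$ is noncompact and fixes all of them); the fact you actually need --- that a finite-order automorphism with a rank-$0$ fixed point fixes all rank-$0$ points and lies in the compact part $\bbZ_k\times S^1$, so that $G$ modulo such elements is cyclic --- is exactly what Izosimov's classification provides and what the paper invokes.
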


\begin{proof}[\indent Proof of Theorem~\ref{Th:SimpleMinModelExists}] 
It is obvious that $N$ is a normal subgroup of $G$. Let us show that the almost direct product $(\cV_m\times\cF_n)/G$ can be ``factorized''  by~$N$,
i.e.\ we can replace $\cV_m\times\cF_n$ with $(\cV_m\times\cF_n)/N$ and $G$ with $G/N$.

It is clear that if in an almost direct product $(W_1\times W_2)/G$ the group~$G$ acts trivially on the component~$W_2$, then $(W_1\times W_2)/G$ is leafwise symplectomorphic to $(W_1/G)\times W_2$. Therefore, without loss of generality, we may assume that the
the almost direct product $(\cV_m\times\cF_n)/G$ is a minimal model (i.e.\ the condition~(\ref{Item:SimpleMinModelOldCond}) of Definitions~\ref{Def:SimpleMinModel} is satisfied
).

By assumption, $G$ is a subgroup of the automorphism group of a focus singularity $\Aut(\cF_n)$, i.e., the group of leafwise symplectomorphisms of singularity~$\cF_n$ into itself that are identical on the base of the foliation. This group is described in~\cite{Izosimov11}.
Let us recall some assertions proved there. Let $H_f$ be a subgroup of Hamiltonian automorphisms of the focus singularity
in the group $\Aut(\cF_n)$, and let $x_1,\dots,x_n$ be  singular points of the focus singularity numbered in order.

\addtocounter{assertion}{3}

\begin{assertion}[see {\cite[Assertion~6]{Izosimov11}}] \label{A:AutFocusGroup}
The group $\Aut(\cF_n)$ is isomorphic to $\bbZ_k\times H_f$, where $k$ is a divisor of $n$, $\bbZ_k=\langle a\rangle$,
$a(x_q)=x_{q+k\pmod n}$ for any singular point $x_q$.
\end{assertion}

\begin{corollary}[see {\cite[Assertion~7]{Izosimov11}}]  
All elements of finite order in the group $\Aut(\cF_n)$ lie in its ``compact part'' $\Aut_0(\cF_n)=\bbZ_k\times S^1$.
\end{corollary} 

\begin{corollary} \label{Cor:MinModelGroup} 
For any minimal model of a saddle-focus singularity, the group $G$ is either cyclic or isomorphic to $\bbZ_k\oplus\bbZ_l$.
More precisely, $G\cap H_f\cong\bbZ_l$ and $G/(G\cap H_f)\cong\bbZ_k$, where $k,l\ge1$ (here we formally set $\bbZ_1=\{0\}$).
\end{corollary}
To prove Theorem~\ref{Th:SimpleMinModelExists}, it remains to take the quotient of the singularity by the action of the group $G\cap H_f\cong\bbZ_l$, i.e.\ to prove the following statement.

\begin{lemma} \label{L:HamActFact} 
Any saddle-focus singularity of the form $(\cV_m\times \cF_n)/G$, where the group $G$ acts on $\cF_n$ by Hamiltonian automorphisms
(i.e., \ preserves all singular points of $\cF_n$) is leafwise symplectomorphic to the singularity $(\cV_m/G)\times \cF_n$.\end{lemma}

\begin{proof}[\indent Proof of Lemma~\ref{L:HamActFact}] 
Let us represent a $2$-atom $\cV_m$ in a form convenient for us. We want to cut
$\cV_m$ so that $\cV_m/G$ can be glued together from its parts. To do this, we use the following obvious assertion.

\begin{assertion} 
If a group $G$ acts freely on a $2$-atom $\cV_m$, then it can be cut into ``crosses'' $K_i$ and ``ribbons'' $L_j$ so that the group $G$
acted freely on sets of ``crosses'' and ``ribbons''.
\end{assertion} 

In the case under consideration, the group $G$ acts freely on $\cV_m$, since it freely acts on $\cV_m\times\cF_n$
and preserves all singular points of $\cF_n$. As an example, Fig.~\ref{Fig:ActionExample} shows a similar representation for the $2$-atom $D_1$,
on which  acts  the group~$\bbZ_2$(as a central symmetry), and the fundamental domain for this action is highlighted.

\begin{figure}[h]
 \centering
 \includegraphics[width=0.35\textwidth]{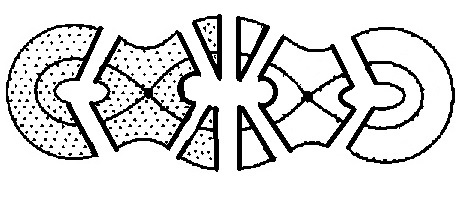}
 \caption{Fundamental region for the action of $\bbZ_2$ on $D_1$}
 \label{Fig:ActionExample}
\end{figure}

Let $\cV_m/G$ be obtained as a result of gluing along the boundary of some
``crosses'' $K_1,\dots,K_p$ and ``ribbons'' $L_1,\dots,L_q$.
Consider an arbitrary boundary point $x\in\partial\bigl(\bigl(\bigcup K_i\bigr)\cup\bigl(\bigcup L_j\bigr)\bigr)$.
Denote by $\sigma(x)$ the point with which it is identified. Then $(\cV_m/G)\times\cF_n$ is homeomorphic to the quotient space
$$
\Bigl(\bigcup K_i\times\cF_n\Bigr)\cup\Bigl(\bigcup L_j\times\cF_n\Bigr)\Big/\sim_1,
$$
where $(x,y)\sim_1(\sigma(x),y)$ for any point $y\in\cF_n$. In turn, $(\cV_m\times\cF_n)/G$ is homeomorphic to the quotient space
$$
\Bigl(\bigcup K_i\times\cF_n\Bigr)\cup\Bigl(\bigcup L_j\times\cF_n\Bigr)\Big/\sim_2,
$$
where $(x,y)\sim_2(\sigma(x),h_x(y))$ for any point $y\in\cF_n$, where $h_x$ is some Hamiltonian automorphism of the singularity~$\cF_n$.

Any Hamiltonian automorphism $h$ is homotopic to the identity in the class of Hamiltonian automorphisms (if $h$ is a shift by unit time along
Hamiltonian vector field~$v$, then the homotopy is a shift by time~$t$ along~$v$). Therefore $(\cV_m/G)\times\cF_n$ and $(\cV_m\times\cF_n)/G$ are homeomorphic,
since we can ``unwind'' each mapping~$h_x$ on a ``rectangle'' --- a small neighborhood of the boundary. For this you can use
the following assertion, which is easy to prove using the tubular neighborhood theorem.

\begin{assertion} 
If we cut the manifold $M^n$ along a submaifold $Q^{n-1}$ of codimension~$1$, and then glue the resulting boundary components
by a mapping homotopic to the original one, then we get a manifold that is homeomorphic to $M^n$.\end{assertion} 

This implies that $(\cV_m\times\cF_n)/G$ and $(\cV_m/G)\times\cF_n$ are Liouville equivalent. It remains to show that the Liouville equivalence
between them can be made symplectic. To do this, it suffices to note that the constructed homotopy automorphisms $h_x$ preserve the symplectic
structure. The fact that the homotopy of all Hamiltonian automorphisms $h_x$ can be made smoothly dependent on the point $x$ is guaranteed by the following simple
assertion.

\begin{assertion} 
Let $\mathcal{F}_n$ be a focus singularity, $f_1, f_2$ be its integrals such that the trajectories of the Hamiltonian vector field
$\sgrad f_2$ are closed with period $2\pi$. Then the functions $H_1 = H_1 (f_1, f_2)$ and $H_2 = H_2 (f_1, f_2)$ define the same Hamiltonian
automorphisms of $\mathcal{F}_n$, i.e. in other words, the $1$-shifts along the Hamiltonian vector fields $\sgrad H_1$ and $\sgrad H_2$ are equal,
if and only if
\[ H_2 - H_1 = \frac{c_1}{2\pi} f_2 + c_2,\] where $c_1 \in \mathbb{Z}, c_2 \in \mathbb{R}$.
\end{assertion}

Lemma~\ref{L:HamActFact} is proved.
\end{proof} 

After factorization of the minimal model by the action of the group $G\cap H_f$, we obtain a simple minimal model with the action of the group $G/(G\cap H_f)$.
Thus, item~\ref{Item:MainThSimMinModel} of Theorem~\ref{Th:SimpleMinModelExists} is fulfilled. Item~\ref{Item:MainThQuot} of
Theorem~\ref{Th:SimpleMinModelExists} is a consequence of Lemma~\ref{L:HamActFact}, Item~\ref{Item:MainThGroup} follows
from Corollary~\ref{Cor:MinModelGroup}. Theorem~\ref{Th:SimpleMinModelExists} is proved. 
\end{proof}

\subsection{Uniqueness of a simple minimal model}

\begin{theorem} \label{Th:SimpleMinModelUnique}
For any simple minimal model
$(\cV_m\times\cF_n)/\bbZ_k$
of a saddle-focus singularity
its $2$-atom $\cV_m$, focus singularity $\cF_n$ and group $\bbZ_k$ are uniquely determined
by the structure of the singularity itself.
\end{theorem}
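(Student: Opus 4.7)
The plan is to extract the three invariants $\cV_m$, $\cF_n$, and $k$ from the singularity $S=(\cV_m\times\cF_n)/\bbZ_k$ as intrinsic data, using the canonical $S^1$-action on $S$ inherited from the periodic focus integral together with Eliasson's normal form at rank-$0$ points.

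First I would recover the atom $\cV_m$ as follows. The Hamiltonian flow of the focus integral $f_2$ defines an $S^1$-action on $\cF_n$ lying in the subgroup $H_f\subset\Aut(\cF_n)$ of Assertion~\ref{A:AutFocusGroup}. Any element of $H_f$ is a Hamiltonian automorphism and therefore fixes every rank-$0$ point of $\cF_n$, so by the simple minimal model condition~(\ref{Item:SimpleMinModelNewCond}) we have $\bbZ_k\cap H_f=\{0\}$; in particular $S^1$ commutes with the $\bbZ_k$-action and hence descends to $S$. Its fixed set in the cover $\cV_m\times\cF_n$ is $\cV_m\times\{y_1,\dots,y_n\}$, i.e., $n$ disjoint copies of $\cV_m$. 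By condition~(\ref{Item:SimpleMinModelNewCond}) the group $\bbZ_k$ permutes these copies freely in orbits of size $k$, and since it acts componentwise each orbit projects to a single connected copy of $\cV_m$ in $S$. Thus $\mathrm{Fix}(S^1)\subset S$ is a disjoint union of $n/k$ copies of $\cV_m$: any one component recovers $\cV_m$ (and hence the integer $m$), and their number recovers $n/k$.

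Next I would recover the focus factor. At any rank-$0$ point $p\in S$, Theorem~\ref{Th:Eliasson} gives a local identification with $H\times F$; the decomposition $T_pS=T_p^H\oplus T_p^F$ is canonical because the real and purely imaginary characteristic exponents of the linearized momentum map pick out the hyperbolic and focus summands. Propagating the $4$-dimensional focus direction along the rank-$2$ stratum of the singular leaf produces a globally embedded symplectic $4$-submanifold $\Phi_p\subset S$. Lifting to the cover, $\Phi_p$ is the image of $\{x_0\}\times\cF_n$ for a lift $x_0$ of $p$, so $\Phi_p\cong\cF_n/G_{x_0}$ where $G_{x_0}\le\bbZ_k$ is the stabilizer of $x_0$; by Izosimov's classification \cite{Izosimov11} this is a focus singularity of complexity $n/|G_{x_0}|$, visible intrinsically inside $S$.

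Finally, to separate $n$ from $k$, I would detect $k$ as the order of a monodromy. Over the complement of the discriminant in the $3$-dimensional base of the momentum map, the foliation of $S$ is a $T^3$-bundle. A small loop around the focus part of the discriminant lying in a regular saddle slice lifts in the cover $\cV_m\times\cF_n$ to a path whose endpoints differ by the action of a generator of $\bbZ_k$; equivalently, $k$ is the order of the deck transformation group of the connected covering $\cV_m\times\cF_n\to S$ restricted to a neighborhood of an $S^1$-fixed component. Combined with $n/k$ from the first step, this yields $n$ and, by Izosimov's theorem, the focus singularity $\cF_n$ itself. The main obstacle will be making the monodromy argument precise in the presence of possibly non-trivial stabilizers $G_{x_0}$ and verifying that the extracted deck transformation has order exactly $k$ rather than a proper divisor. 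The key leverage for this is condition~(\ref{Item:SimpleMinModelOldCond}) of Definition~\ref{Def:SimpleMinModel}, which forbids any non-trivial subgroup of $\bbZ_k$ from acting trivially on $\cV_m$ and thereby prevents any such collapse.
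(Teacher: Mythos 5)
Your first two steps track the paper's own strategy: the fixed set of the canonical $S^1$-action is the stratum $K_0\cup K_1$, whose induced foliation recovers $\cV_m$ and the ratio $n/k$, and the rank-$2$ stratum $K_0\cup K_2$ recovers the focus singularities $\cF_n/G_{x_0}$. The gap is in your third step, which is exactly where the real difficulty of the theorem lies. You propose to read off $k$ as ``the order of the deck transformation group of the covering $\cV_m\times\cF_n\to S$'', but that covering is part of the presentation whose uniqueness is to be proved, so the argument is circular: to make it work you must exhibit an invariant of the singularity alone that separates $n$ from $k$, and you have not done so. Condition~(\ref{Item:SimpleMinModelOldCond}) only guarantees that a \emph{given} presentation has deck group of order exactly $k$; it says nothing about whether a \emph{different} simple minimal model $(\cV_m\times\cF_{n'})/\bbZ_{k'}$ could yield a leafwise homeomorphic singularity.

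Concretely, the invariants you extract fail to distinguish the pair $(\cV_m\times\cF_n)/\bbZ_k$, where no nontrivial element fixes a rank-$0$ point of $\cV_m$, from $(\cV_m\times\cF_{2n})/\bbZ_{2k}$, where the involution $a^k$ fixes all $m$ rank-$0$ points of $\cV_m$: both give $n/k$ copies of $\cV_m$ on $K_0\cup K_1$ and $m/k$ focus singularities of the same complexity $n$ on $K_0\cup K_2$. The paper has to work to kill precisely this case: it first shows (Assertion~\ref{A:UniqAtomIdentInvol}) that an involution of a $2$-atom fixing all rank-$0$ points exists only for the special series $X_m$, $Y_m$; it then counts Liouville tori over the two half-spaces cut out by $\Pi$, which settles everything except $Y_m$ with $m$ even; and for that last case it compares the combinatorial structure of the singular leaf, namely the chains of adjoined one- and three-dimensional orbits (e.g.\ $C_1\times\cF_1$ has four chains with one orbit of each kind, while $(C_1\times\cF_2)/\bbZ_2$ has two chains with two of each). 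Your monodromy idea is not a substitute without computation: since $\cF_{2n}/\bbZ_{2}$ is Liouville equivalent to $\cF_{n}$ and the two models agree on all the coarse strata, there is no a priori reason the monodromy of the regular $T^3$-bundle around $\Pi$ detects the difference, and you give no argument that it does.
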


We list some properties of critical points of a singularity of saddle-focus type in the form of the following statement,
which can be easily proved since we know how the group $\bbZ_k$ acts on a simple minimal model
(see Theorem~\ref{Th:SimpleMinModelExists}).

\begin{assertion} \label{A:SaddleFocusCritPoints}  
Consider a simple minimal model $(\cV_m\times\cF_n)/\bbZ_k$ of a saddle-focus singularity. Denote by $K_i$ the set of its critical
points of rank~$i$.
Then the following holds for a sufficiently small neighborhood of the singular leaf.
\begin{enumerate}
\item \label{Item:SaddleFocusPlaneAndLine}  
The bifurcation diagram, that is, the image of all critical points of the singularity under the momentum mapping is the union of a line~$l$ and a plane~$\Pi$,
transversely intersecting at a point~$P$.
\item
The set $K_0$ consists of $\frac{mn}k$ points lying in the inverse image of the point~$P$.
\item \label{Item:SaddleFocusCrit0and1}
The set $K_0\cup K_1$ forms a $2$-dimensional symplectic submanifold $M_1^2$ contained in the inverse image of the line~$l$.
The induced Liouville foliation on $M_1^2$ is Liouville equivalent to $\frac nk$ instances of the $2$-atom~$\cV_m$.

\item \label{Item:SaddleFocusCrit0and2} 
The set $K_0\cup K_2$ forms a $4$-dimensional symplectic submanifold $M_2^4$ contained in the inverse image of the plane~$\Pi$.
\begin{enumerate}
\item
If $k$ is odd, then the induced Liouville foliation on $M_2^4$ is Liouville equivalent to the union of   $\frac mk$  focus singularities~$\cF_n$.
\item
If $k=2k_1$ and the action of $\bbZ_k=\langle a\rangle$ on $\cV_m$ has exactly $s$ points of rank~$0$ with stabilizer $\bbZ_2$
(i.e., \ points fixed under the action of the element $a^{k_1}$), where $0\le s\le m$, then the induced Liouville foliation on $M_2^4$
is Liouville equivalent to the union of $\frac{m-s}k$ focus singularities~$\cF_n$ and $\frac s{k_1}$ focus singularities~$\cF_{\frac n2}$.
\end{enumerate}

\end{enumerate}
\end{assertion} 

Note the following fact, which follows from Item~\ref{Item:SaddleFocusCrit0and2} of Assertion~\ref{A:SaddleFocusCritPoints}.

\begin{corollary} \label{Cor:DiffFixPoints} 
If for two simple minimal models of the form $(\cV_m\times\cF_n)/\bbZ_k$
the action of $\bbZ_k$ on $\cV_m$ has a different number of points of rank~$0$ with stabilizer $\bbZ_2$, then these singularities are not Liouville equivalent.
\end{corollary}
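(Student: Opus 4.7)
The plan is to extract the parameter $s$ from the Liouville equivalence class of the singularity as a function of Liouville invariants, so that different values of $s$ must yield non-equivalent singularities.

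First, I would note that the rank stratification of the critical set is preserved under any Liouville equivalence. The rank at a critical point can be detected from the local geometry of the bifurcation diagram described in item~\ref{Item:SaddleFocusPlaneAndLine} of Assertion~\ref{A:SaddleFocusCritPoints}: rank-$0$ points lie over the transverse intersection point $P$, rank-$1$ points over $l\setminus\{P\}$, and rank-$2$ points over $\Pi\setminus\{P\}$. A leafwise homeomorphism carries the bifurcation diagram to the bifurcation diagram of the other model, preserving these strata. Consequently, the submanifold $M_2^4 = K_0\cup K_2$ together with its induced Liouville foliation is an invariant of the Liouville equivalence class of the singularity.

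Next, I would apply item~\ref{Item:SaddleFocusCrit0and2} of Assertion~\ref{A:SaddleFocusCritPoints} to read $s$ off this invariant. Different values of $s$ can occur only when $\bbZ_k$ has an element of order~$2$, i.e.\ when $k$ is even; write $k=2k_1$. The induced foliation on $M_2^4$ is then Liouville equivalent to the disjoint union of $\tfrac{m-s}{k}$ copies of $\cF_n$ and $\tfrac{s}{k_1}$ copies of $\cF_{n/2}$. Because focus singularities of different complexities are not Liouville equivalent (see~\cite{Izosimov11}), and a Liouville equivalence permutes connected components, the number of $\cF_{n/2}$-components of $M_2^4$ is a Liouville invariant of the whole singularity. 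This number equals $s/k_1$, from which $s$ is recovered, since $k$, and hence $k_1=k/2$, is fixed by the hypothesis of the corollary.

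Now suppose, for contradiction, that two simple minimal models $(\cV_m\times\cF_n)/\bbZ_k$ with parameters $s_1 \neq s_2$ are Liouville equivalent. Then the invariants extracted above coincide, so $s_1/k_1 = s_2/k_1$ and hence $s_1=s_2$, a contradiction. I do not anticipate a substantive obstacle here: the entire combinatorial content is already packaged in item~\ref{Item:SaddleFocusCrit0and2}, and the only thing remaining is to observe that the decomposition type of $M_2^4$ is a Liouville invariant. The only mild subtlety is confirming that the rank stratification is intrinsic to the Liouville foliation, but this follows from the description of the bifurcation diagram above.
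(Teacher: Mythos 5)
Your proposal is correct and follows the paper's intended route: the paper states the corollary as an immediate consequence of item~\ref{Item:SaddleFocusCrit0and2} of Assertion~\ref{A:SaddleFocusCritPoints}, and you have simply supplied the (routine) details — that the rank stratification and hence $M_2^4$ with its induced foliation is a Liouville invariant, and that the count of $\cF_{n/2}$-components recovers $s$ since $k$ is fixed. No gaps.
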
 

\begin{proof}[\indent Proof of Theorem~\ref{Th:SimpleMinModelUnique}] 
Let $(\cV_m\times\cF_n)/\bbZ_k$ be some simple minimal model for a given saddle-focus singularity.
Then according to Item~\ref{Item:SaddleFocusCrit0and1} of Assertion~\ref{A:SaddleFocusCritPoints} the structure of the induced Liouville foliation on the manifold $M_1^2=K_0\cup K_1$ uniquely determines the $2$-atom $\cV_m$ and also the ratio $\frac nk$.
		
To determine $n$ (that is, the factor $\cF_n$ in the minimal model), consider the induced Liouville foliation on $M_2^4=K_0\cup K_2$.
According to Item~\ref{Item:SaddleFocusCrit0and2} from Assertion~\ref{A:SaddleFocusCritPoints} this foliation is either a union of some $c$ focus singularities~$\cF_N$ for some $N$,
or a union of $a$ focus singularities~$\cF_N$ and $b$ focal singularities~$\cF_{2N}$, where $a,b\ge1$.
In the second case, we immediately get that $n=2N$ (and we can also easily calculate $s$, knowing $a$ and $b$).
But if the first case is realized, i.e.,\ the induced foliation on $M_2^4$ contains only focus singularities of the same complexity $N$,
then two options are theoretically possible: $s=0$ or $s=m$. In the first case $s=0$ we will have $n=N$ (and $k=\frac mc$), and in the second case  $s=m$ we get $n=2N$ (and~$k=\frac{2m}c$). Generally speaking, we cannot distinguish between these two options proceeding only from information about the induced Liouville foliations on the submanifolds $M_1^2$ and $M_2^4$. In other words, based on the Items~\ref{Item:SaddleFocusCrit0and1}~and~\ref{Item:SaddleFocusCrit0and2} from Assertion~\ref{A:SaddleFocusCritPoints},
we can distinguish any simple minimal models except for following pairs:

\begin{enumerate}
\item \label{Item:Zk} 
$(\cV_m\times\cF_n)/\bbZ_k$, where the action of any non-trivial element of the group $\bbZ_k$ has no fixed points of rank~$0$ on $\cV_m$,
\item \label{Item:Z2k}
$(\cV_m\times\cF_{2n})/\bbZ_{2k}$, where the involution $a^k$ of~$\cV_m$ (here $a$ is a generator of the group $\bbZ_{2k}$ )
leaves all $m$ points of rank $0$ fixed.
\end{enumerate}

It turns out that two actions with such conditions can exist only on $2$-atoms $\cV_m$ of some special form,
described in the following simple statement.

\begin{assertion} \label{A:UniqAtomIdentInvol} 
If there exists an involution on a $2$-atom $\cV_m$ that leaves all points of rank $0$ in place, then such  atom belongs to one of the two series of atoms shown in Fig.~\ref{Fig:XY-atoms} (their $f$-graphs are also shown there).
\end{assertion} 

\begin{figure}[h]
\centering
\includegraphics[width=0.95\textwidth]{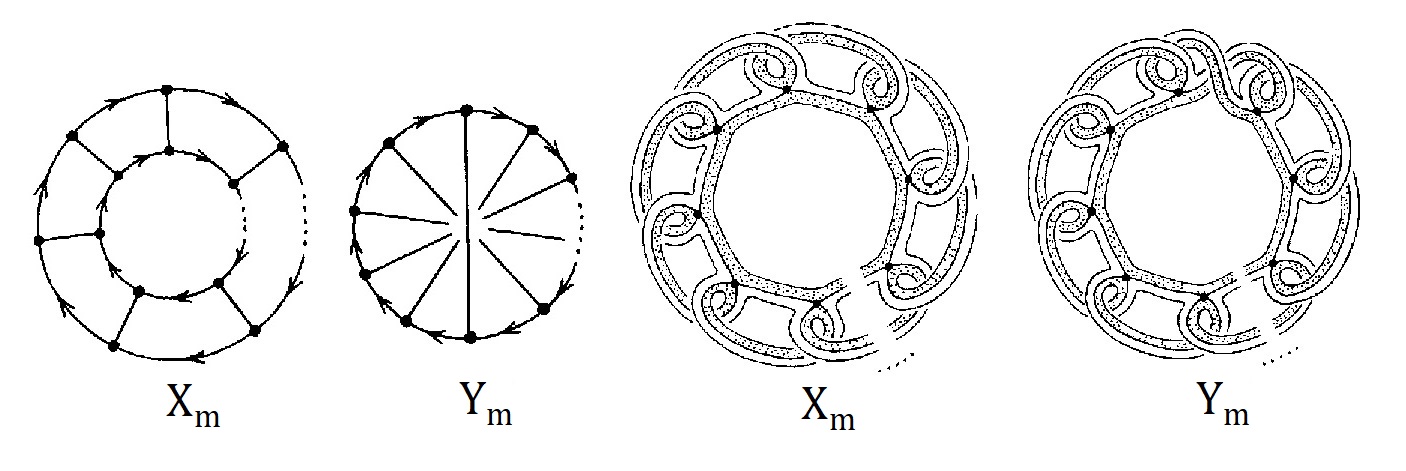}
\caption{Atoms $X_m$ and $Y_m$ with their $f$-graphs} 
\label{Fig:XY-atoms}
\end{figure}

In the book \cite{BolsFom} (where Fig~\ref{Fig:XY-atoms} comes from) these series of $2$-atoms
are given as examples of maximally symmetric atoms and are denoted by $X_m$ and $Y_m$, respectively.
Note that the properties of the $2$-atoms $X_m$ and $Y_m$ depend on the parity of the number~$m$ equal to their complexity.
In particular, the $2$-atoms $X_{2r}$ describe a bifurcation of two circles into two circles, the $2$-atoms~$Y_{2r}$~--- a bifurcation of one circle into one circle. The atoms $X_{2r+1}$ and $Y_{2r+1}$ differ by the sign of the function defining a foliation on them, and they describe a transformation of two circles into one or one into two, respectively.

As noted above, for simple minimal models of the form \ref{Item:Zk} and \ref{Item:Z2k}
the induced Liouville foliations on the manifolds $M_1^2$ and $M_2^4$ are the same. Let us consider other characteristics of these singularities, taking into account that from Assertion~\ref{A:UniqAtomIdentInvol} we know which 2-atoms $V_m$ can be in minimal models of the form \ref{Item:Zk} and \ref{Item:Z2k} .

For a singularity of the saddle-focus type, there is the following simple characteristic:
a pair of numbers equal to the number of Liouville tori in the inverse images of points in both semi-spaces into which the plane $\Pi$ in the image of the momentum mapping divides the space
(see Item~\ref{Item:SaddleFocusPlaneAndLine} Assertion~\ref{A:SaddleFocusCritPoints}). Considering all possible actions of the groups $\mathbb{Z}_k$ and $\mathbb{Z}_{2k}$ on $2$-atoms from the
series $X_m$ and $Y_m$ satisfying the conditions listed in \ref{Item:Zk} and \ref{Item:Z2k}, we get the following
statement.

\begin{assertion} Let $(\mathcal{V}_m \times \mathcal{F}_n)/Z_{k}$ and $(\mathcal{V}_m \times \mathcal{F}_{2n})/Z_{2k} $ be simple minimal
models of the form \ref{Item:Zk} and \ref{Item:Z2k}, where $\mathcal{V}_m$ is a 2-atom from the series $X_m$ or $Y_m$. Then the number of Liouville tori in the inverse images of regular points for the pair of half-spaces into which the plane $\Pi$ from the bifurcation diagram divides the image of the momentum map, is equal to

\begin{itemize}

\item(2, 2) for $(\mathcal{V}_m \times \mathcal{F}_n)/Z_{k}$ and (1, 1) for $(\mathcal{V}_m \times \mathcal{F }_{2n})/Z_{2k}$ if $\mathcal{V}_m = X_m$, where $m$ is even,
\item (1, 1) for $(\mathcal{V}_m \times \mathcal{F}_n)/Z_{k}$ and (1, 1) for $(\mathcal{V}_m \times \mathcal {F}_{2n})/Z_{2k}$ if $\mathcal{V}_m = Y_m$, where $m$ is even,
\item (2, 1) for $(\mathcal{V}_m \times \mathcal{F}_n)/Z_{k}$ and (1, 1) for $(\mathcal{V}_m \times \mathcal {F}_{2n})/Z_{2k}$ if $\mathcal{V}_m =X_m$ or $\mathcal{V}_m =Y_m$, where $m$ is odd.

\end{itemize}
\end{assertion}

Thus, it remains to prove that the singularities $(Y_m \times \mathcal{F}_n)/Z_{k}$ and $(Y_m \times \mathcal{F}_{2n})/Z_{2k}$, where $m$ is even, are not Liouville equivalent. To do this, it suffices to study the combinatorial structure of the singular leaf $L$ (i.~e., the inverse image of the point of intersection of the line $l$ and the plane $\Pi$ in the bifurcation diagram). Namely, we need to look at the stratification of the leaf $L$ by points
of different ranks, i.e., the sets $L \cap K_i$ and how they are adjoined to each other.

Every three-dimensional orbit in $L$ is a solid torus $\bbR^2 \times S^1$. Its boundary contains two one-dimensional orbits $\bbR^1$,
two two-dimensional orbits $\bbR^1 \times S^1$ and four zero-dimensional orbits (some of these orbits may coincide). We can
consider chains of adjoined three-dimensional and one-dimensional orbits. For considered singularities $(Y_m \times \cF_n)/\mathbb{Z}_k$ and $(Y_m \times \cF_{2n})/\bbZ_{2k}$
these chains will be different. For example, consider the case $n=1$ and $m=2$ (note that $Y_2$ is the
2-atom $C_1$, see Fig.~\ref{Fig:SaddleComp2Atoms}). $C_1 \times \cF_1$ has 4 chains, consisting of 1 one-dimensional orbit and 1 three-dimensional orbit.
And $\left(C_1 \times \cF_2\right) / \mathbb{Z}_2$ has 2 chains, in which there are 2 one-dimensional orbits and 2 three-dimensional orbits. In the general case, the proof is similar.

Theorem~\ref{Th:SimpleMinModelUnique} is proved. 
\end{proof} 

\section{Description of the algorithm and classification of singularities of low complexity} \label{S:AlgAndComp1and2}

\subsection{Symmetry group of a $2$-atom}

In order to describe saddle-focus singularities of low complexity, we briefly recall the structure of the symmetry group for $2$-atoms of low complexity (See \cite{BolsFom} for details).

Let $\mathcal{V}_m$ be an arbitrary $2$-atom. Denote by $\Sym(\mathcal{V}_m)$ the automorphism group of the $2$-atom $\mathcal{V}_m$ up to isotopy.
\cite{BolsFom} shows that this is a finite group, and that it is isomorphic to the automorphism group of the $f$-graph of the $2$-atom.
The Table~\ref{Tab:AtomSym} contains the group $\Sym(\mathcal{V}_m)$ for the atoms of complexity $m=1,2,3$.
A list of $2$-atoms of complexity 3 is given in \cite{BolsFom}. These atoms are shown schematically in Fig.~\ref{Fig:SaddleComp3}.

\begin{table}[h]
\centering
\begin{tabular}{|c|c|c|c|c|c|c|c|c|c|c|c|c|c|c|c|}
\hline
$m$           & \multicolumn{1}{c|}{1}  & \multicolumn{4}{c|}{2} & \multicolumn{10}{c|}{3} \\
\hline
atom $\cV_m$ &$B$     &$C_1$   &$C_2$               &$D_1$   &$D_2$&$E_1$   &$E_2$   &$E_3$&$F_1$&$F_2$&$G_1$   &$G_2$&$G_3$   &$H_1$   &$H_2$\\ 
\hline
$\Sym(\cV_m)$&$\bbZ_2$&$\bbZ_4$&$\bbZ_2\oplus\bbZ_2$&$\bbZ_2$&$e$  &$\bbZ_6$&$\bbZ_2$&$S_3$&$e$  &$e$  &$\bbZ_2$&$e$  &$\bbZ_2$&$\bbZ_3$&$e$  \\
\hline
\end{tabular}
\caption{Symmetry groups of saddle atoms of complexity 1, 2, 3}
\label{Tab:AtomSym}
\end{table}

\begin{figure}[h!]
 \centering
 \includegraphics[width=\textwidth]{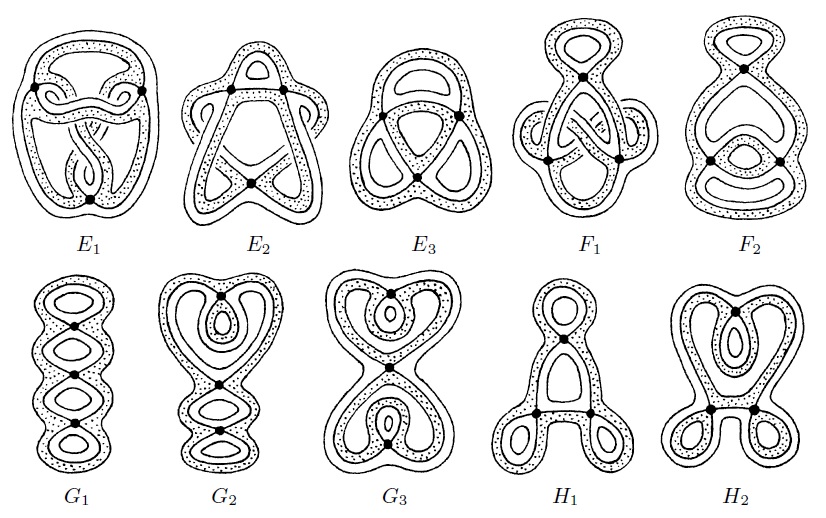}
 \caption{Atoms of complexity 3}
 \label{Fig:SaddleComp3}
\end{figure}

\begin{remark} \label{Rem:C2Sym} 
The $2$-atom $C_2$ has the symmetry group $\Sym(C_2)=\bbZ_2\oplus\bbZ_2$.
If we represent the $2$-atom $C_2$ symmetrically, as in Fig.~\ref{Fig:C2Sym}, then the nontrivial elements of $\Sym(C_2)$ are  rotations by $\pi$ around the axes $x$,$y$ and $z$. For the first two symmetries
there are no fixed points of rank $0$, the latter leaves the points of rank $0$ in place. \end{remark}

\begin{figure}[h!]
 \centering
 \includegraphics[width=0.35\textwidth]{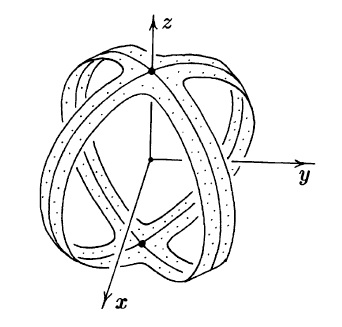}
 \caption{Symmetries of the atom $C_2$}
 \label{Fig:C2Sym}
\end{figure}

\subsection{Classification algorithm for saddle-focus singularities}

By Theorem~\ref{Th:SimpleMinModelExists}, to classify saddle-focus singularities, it suffices to consider their simple minimal models. They all have the form $(\cV_m\times\cF_n)/\bbZ_k$. Notice, that
\begin{itemize}
\item
the complexity of the singularity $(\cV_m\times\cF_n)/\bbZ_k$ is equal to $md$, where $n=kd$,
\item
the generating element of the group $\bbZ_k$ acts on $\cV_m$ as an element of order $k$ in the symmetry group $\Sym(\cV_m)$ of the atom $\cV_m$,
\item
the order of the symmetry group $\Sym(\cV_m)$ does not exceed $2m$.
\end{itemize}

Based on these considerations, we get the following algorithm to obtain a complete list of saddle-focus singularities of complexity $p$.
\begin{enumerate}
\item
We enumerate all possible divisors $m$ of the number $p$.
\item
For each saddle $2$-atom $\cV_m$ of complexity $m$, we consider all possible $k$ for which the group $\Sym(\cV_m)$ contains a subgroup $\bbZ_k$.
\item \label{Item:AlgConsModels}
We consider all possible simple minimal models of the form
\begin{equation} \label{Eq:AlgMinModel}
(\cV_m \times \cF_{\frac{kp}m}) / \bbZ_k,
\end{equation}
if they exist.
\item
If there are several simple minimal models of the same type \eqref{Eq:AlgMinModel}, then it is checked whether
they are Liouville equivalent.
\end{enumerate}

Let us show that for a singularity of any fixed complexity $p$, the algorithm will finish in a finite number of steps. It is obvious that there is a finite number of variants for numbers $m$, $k$ and $2$-atoms $\cV_m$ (up to Liouville equivalence). Therefore, it suffices to show that
at the step~\ref{Item:AlgConsModels} it is enough to consider a finite number of singularities. If the actions of the groups $\bbZ_k$ from \eqref{Eq:AlgMinModel} on any of the factors are conjugate
(by an automorphism of Liouville foliations), then the resulting fibrations are Liouville equivalent.

Note that for any simple minimal model $\left(\cV_m \times \cF_n/\right)\bbZ_k$ the factor $\cF_n/\bbZ_k$ is a nondegenerate focus singularity
(i.e. \ of type $(0,0,1)$). For fixed $k$ and $n$ all such singularities $\cF_n/\bbZ_k$ are Liouville equivalent (see, for example,
\cite{Izosimov11}). Therefore, any two actions of the group $\bbZ_k$ on $\cF_n$ in simple minimal models
can be translated into each other by a  Liouville equivalence of $\cF_n$ (for fixed $k$ and $n$).

It remains to prove the following assertion.

\begin{assertion}\label{A:AtomConjFiniteGroupAction} 
Up to a Liouville automorphism of the $2$-atom $\cV_m$, there are only finitely many actions of the group $\bbZ_k$ on the $2$-atom $\cV_m$.\end{assertion}

The Assertion~\ref{A:AtomConjFiniteGroupAction} follows from the following simple statement.

\begin{assertion}  \label{A:EquivarAct2Atom} 
Let $\rho_1$ and $\rho_2$ be two actions of a finite group $G$ on a $2$-atom $cV_m^2$ that preserve the symplectic structure and the function defining the Liouville foliation (and hence the Liouville foliation itself).
Then, if the actions act in the same way on the set of one-dimensional edges of the singular leaf of the $2$-atom $\cV_m^2$, then they coincide up to
an automorphism of the $2$-atom $\cV_m^2$. In other words, there exists a Liouville equivalence $f:\cV_m^2 \to \cV_m^2$ such that for any $g \in G$
$\rho_2(g) = f^{-1} \circ \rho_1(g)\circ f$.
\end{assertion} 

Essentially the proof of Assertion~\ref{A:EquivarAct2Atom} is similar to the proof of Assertion~8 in \cite{Izosimov11}.

\begin{proof}[\indent Proof of Assertion~\ref{A:EquivarAct2Atom}] 
By cutting the $2$-atom into ``crosses'' and then into smaller pieces, it is easy to construct homeomorphic
fundamental areas for $\rho_1$ and $\rho_2$ actions. The automorphism $f$ identifies these domains and then extends to an equivariant mapping. Assertion~\ref{A:EquivarAct2Atom} is proved.
\end{proof}

\begin{remark} Not all mappings acting in the same way on the set of edges are conjugate. For example, shifts along Hamiltonian vector fields
act identically on the set of edges, but are not conjugate to the identity mapping. We used that the group $G$ is finite, thus there does not exist an automorphism of $\cV_m^2$ of finite order, acting identically on the edges. \end{remark} 

\begin{remark} In Assertion~\ref{A:EquivarAct2Atom} one cannot replace the leafwise homeomorphism $f$ with a symplectomorphism. $2$-atoms have symplectic invariants, and the group $G$ must preserve them.\end{remark}

\begin{remark} The number of edge permutations is finite, so Assertion~\ref{A:AtomConjFiniteGroupAction} can be generalized. Namely, similarly to Assertion~\ref{A:EquivarAct2Atom} it turns out that actions are conjugate if they generate identical subgroups in $\Sym(\cV_m)$. In particular, one can consider $G \subset \Sym(\cV_m)$.
\end{remark}

\subsection{Saddle-focus singularities of complexity $1$, $2$ and $3$}       

Let us now explicitly describe all nondegenerate singularities of saddle-focus type (that is, \ of type $(0,1,1)$) of small complexity.

\begin{theorem} \label{Th:MainComp1} 
Any saddle-focus singularity of complexity $1$ is Liouville equivalent to exactly one of the following $2$ almost direct products:
\begin{equation} \label{Eq:SF_Comp1} 
B \times \cF_1,\qquad (B \times \cF_2)/\bbZ_2.
\end{equation}
\end{theorem}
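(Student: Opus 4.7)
The plan is to apply directly the algorithm of Section~\ref{S:AlgAndComp1and2} to the case $p=1$ and verify that only two simple minimal models arise, neither of which can be further identified.

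First, I would list the data. Since the complexity is $p=1$, we must have $md=1$, hence $m=1$ and $n=k$, so the candidate simple minimal models have the form $(\cV_1\times\cF_k)/\bbZ_k$. The only saddle $2$-atom of complexity $1$ is $B$ (the atom $A$ is elliptic), and by Table~\ref{Tab:AtomSym} the symmetry group $\Sym(B)$ is $\bbZ_2$. Therefore $k\in\{1,2\}$, and the only candidates are
\[
B\times\cF_1 \quad\text{and}\quad (B\times\cF_2)/\bbZ_2.
\]

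Next, I would verify that each candidate is indeed a simple minimal model and is uniquely determined up to Liouville equivalence. For $k=1$ there is nothing to check. For $k=2$ the generator of $\bbZ_2$ must act as the unique nontrivial element of $\Sym(B)$ (required by Definition~\ref{Def:SimpleMinModel}(\ref{Item:SimpleMinModelOldCond})), and on $\cF_2$ it must act without rank~$0$ fixed points (required by Definition~\ref{Def:SimpleMinModel}(\ref{Item:SimpleMinModelNewCond})); by Assertion~\ref{A:AutFocusGroup} applied with $n=2$, $k=2$, such a $\bbZ_2$ sits inside $\Aut(\cF_2)$ as the cyclic factor that swaps the two rank~$0$ points, and is unique up to conjugation. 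Moreover, Assertion~\ref{A:AtomConjFiniteGroupAction} guarantees that the action on $B$ is also unique up to a Liouville automorphism of $B$, so $(B\times\cF_2)/\bbZ_2$ is well defined as a Liouville equivalence class, and the action on the product is free (it exchanges the two rank~$0$ fibers while moving $B$).

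Finally, I would establish that the two singularities are not Liouville equivalent. This is immediate from Theorem~\ref{Th:SimpleMinModelUnique}: the invariants $(\cV_m,\cF_n,k)$ are $(B,\cF_1,1)$ in the first case and $(B,\cF_2,2)$ in the second, so they are distinct simple minimal models and hence define non-equivalent singularities. As a sanity check one can also observe that the induced foliation on the submanifold $M_2^4=K_0\cup K_2$ in Assertion~\ref{A:SaddleFocusCritPoints}(\ref{Item:SaddleFocusCrit0and2}) is one copy of $\cF_1$ in the first case and one copy of $\cF_2$ in the second.

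I do not expect serious obstacles here: the content of the theorem is really that the preceding machinery---Theorems~\ref{Th:SimpleMinModelExists} and \ref{Th:SimpleMinModelUnique}, Assertion~\ref{A:AutFocusGroup} and Assertion~\ref{A:AtomConjFiniteGroupAction}---leaves no room for additional models when $p=1$. The only point requiring any care is the uniqueness (up to conjugation) of the $\bbZ_2$-action on each factor in the case $k=2$, which is handled by the two finiteness assertions just cited.
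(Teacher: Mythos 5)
Your proposal is correct and follows essentially the same route as the paper: enumerate $m=1$, the single saddle $2$-atom $B$ with $\Sym(B)=\bbZ_2$, hence $k\in\{1,2\}$, obtain the two models, and separate them by Theorem~\ref{Th:SimpleMinModelUnique}. The extra care you take about uniqueness of the $\bbZ_2$-action on each factor (via Assertions~\ref{A:AutFocusGroup} and~\ref{A:AtomConjFiniteGroupAction}) is implicit in the paper's appeal to its general algorithm, so there is no substantive difference.
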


\begin{remark} 
Both of these singularities were previously described by L.\,M.~Lerman (see~\cite{Lerman2000}). We will give another proof of this fact.
\end{remark}

\begin{proof}[\indent Proof of Theorem~\ref{Th:MainComp1}]  
We apply the algorithm described above.
\begin{enumerate}
\item
Complexity $p=1$. The only divisor of $p$ is $m=1$.
\item
The only focus singularity of complexity $1$ is the $2$-atom $B$. The symmetry group is $\Sym(B)= \bbZ_2$. Therefore $k=1$ or $2$.
\item
For $k=1$ we get the direct product $B \times \cF_1$. The only simple minimal model for $k=2$ is $(B \times \cF_2)/\bbZ_2$.
\end{enumerate}

By Theorem~\ref{Th:SimpleMinModelUnique}, simple minimal models with different groups or factors are not Liouville equivalent. We have received the required list of singularities  \eqref{Eq:SF_Comp1}. Theorem~\ref{Th:MainComp1} is proved.
\end{proof}

\begin{theorem} \label{Th:MainComp2} 
Any saddle-focus singularity of complexity $2$ is Liouville equivalent to exactly one of the following $11$ almost direct products:
\begin{gather*}
B \times \cF_2, \quad (B \times \cF_4)/\bbZ_2, \quad D_1 \times \cF_1, \quad (D_1 \times \cF_2)/\bbZ_2, \quad D_2 \times \cF_1, \\
C_1 \times \cF_1, \quad (C_1 \times \cF_2)/\bbZ_2, \quad (C_1 \times \cF_4)/\bbZ_4, \quad C_2 \times \cF_1,\\
\text{ and two variants of } (C_2 \times \cF_2)/\bbZ_2.
\end{gather*} Two variants of $(C_2 \times \cF_2)/\bbZ_2$ differ in the number of fixed points of rank $0$ for the action of $\bbZ_2$ on $C_2$.
\end{theorem}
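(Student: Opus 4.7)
The plan is to apply the algorithm from the preceding subsection with $p=2$ and carefully enumerate all simple minimal models $(\cV_m\times\cF_{2k/m})/\bbZ_k$, then use Theorem~\ref{Th:SimpleMinModelUnique} (together with Corollary~\ref{Cor:DiffFixPoints}) to check that models with the same signature really do coincide, and that distinct signatures give inequivalent singularities. The divisors of $p=2$ are $m=1$ and $m=2$, so the enumeration splits into two cases.

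For $m=1$ the only saddle $2$-atom is $B$ with $\Sym(B)=\bbZ_2$, so $k\in\{1,2\}$, giving $B\times\cF_2$ and $(B\times\cF_4)/\bbZ_2$. For $m=2$ we read off from Table~\ref{Tab:AtomSym} that the relevant $2$-atoms are $C_1,C_2,D_1,D_2$ with symmetry groups $\bbZ_4,\bbZ_2\oplus\bbZ_2,\bbZ_2,e$. Thus we obtain at most the following candidates: $C_1\times\cF_1$, $(C_1\times\cF_2)/\bbZ_2$, $(C_1\times\cF_4)/\bbZ_4$ for $\cV_2=C_1$; $D_1\times\cF_1$ and $(D_1\times\cF_2)/\bbZ_2$ for $\cV_2=D_1$; $D_2\times\cF_1$ for $\cV_2=D_2$; and finally $C_2\times\cF_1$ plus the possibilities arising from the three order-$2$ subgroups of $\Sym(C_2)=\bbZ_2\oplus\bbZ_2$. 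In each case one must confirm that the action of $\bbZ_k$ on $\cF_{2k/m}$ can indeed be chosen so that all non-trivial elements cyclically permute the rank-$0$ points (so that condition~(\ref{Item:SimpleMinModelNewCond}) of Definition~\ref{Def:SimpleMinModel} holds) and so that the diagonal action on $\cV_m\times\cF_{2k/m}$ is free; for cyclic $\bbZ_k$ acting as a rotation of the $n=2k/m$ singular points of $\cF_n$ this is automatic.

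The main obstacle is the $C_2$ case with $k=2$, since there are three involutions in $\Sym(C_2)$. By Remark~\ref{Rem:C2Sym} one of them (the $z$-rotation) fixes both rank-$0$ points of $C_2$, while the other two (the $x$- and $y$-rotations) have no rank-$0$ fixed points. Applying Corollary~\ref{Cor:DiffFixPoints} to Item~\ref{Item:SaddleFocusCrit0and2} of Assertion~\ref{A:SaddleFocusCritPoints} shows that the $z$-variant is not Liouville equivalent to either of the other two. For the $x$- and $y$-variants I would argue that the two involutions are conjugate inside $\Sym(C_2)$ (the third involution, rotation around $z$, swaps them), hence by Assertion~\ref{A:EquivarAct2Atom} the two $\bbZ_2$-actions on $C_2$ are conjugate by a Liouville automorphism of $C_2$, and the resulting quotients $(C_2\times\cF_2)/\bbZ_2$ are therefore Liouville equivalent. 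This yields exactly two inequivalent variants of $(C_2\times\cF_2)/\bbZ_2$, distinguished by the number of rank-$0$ fixed points of the $\bbZ_2$-action on $C_2$.

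Finally, Theorem~\ref{Th:SimpleMinModelUnique} guarantees that simple minimal models with different factors $\cV_m$ or $\cF_n$ or different orders $k$ are pairwise non-equivalent, so the only subtle pairwise comparisons to verify are those among models with the same triple $(\cV_m,\cF_n,k)$; these are handled by the $C_2$ discussion above, and in all other cases there is at most one candidate per triple (the atoms $B$, $C_1$, $D_1$ have cyclic symmetry groups, so all subgroups of a given order are conjugate automatically). Putting the two cases together produces the list
\[
B\times\cF_2,\ (B\times\cF_4)/\bbZ_2,\ D_1\times\cF_1,\ (D_1\times\cF_2)/\bbZ_2,\ D_2\times\cF_1,\ C_1\times\cF_1,
\]
\[
(C_1\times\cF_2)/\bbZ_2,\ (C_1\times\cF_4)/\bbZ_4,\ C_2\times\cF_1,
\]
together with the two variants of $(C_2\times\cF_2)/\bbZ_2$, for a total of $11$ pairwise inequivalent singularities, which is the content of the theorem.
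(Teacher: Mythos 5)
Your overall strategy is exactly the paper's: the published proof of this theorem consists of running the algorithm as in the complexity-$1$ case and then treating only the delicate case $(C_2\times\cF_2)/\bbZ_2$, distinguishing the two variants by Corollary~\ref{Cor:DiffFixPoints} and identifying the $x$- and $y$-rotation variants via a conjugacy of the two actions. Your enumeration of candidates and the final count of $11$ are correct.

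There is, however, one step whose justification is wrong as written. You claim the $x$- and $y$-rotations ``are conjugate inside $\Sym(C_2)$ (the third involution, rotation around $z$, swaps them)''. Since $\Sym(C_2)\cong\bbZ_2\oplus\bbZ_2$ is abelian, conjugation within it is trivial, so the $z$-rotation cannot carry the $x$-rotation to the $y$-rotation; the three involutions are permuted only by \emph{outer} automorphisms of the abstract group, and an outer automorphism need not be realized geometrically. Relatedly, Assertion~\ref{A:EquivarAct2Atom} does not apply directly: the $x$- and $y$-rotations do not act identically on the edges of the singular leaf (otherwise their product, the $z$-rotation, would act trivially on the edges and hence be isotopic to the identity, contradicting $\Sym(C_2)=\bbZ_2\oplus\bbZ_2$). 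What the paper actually uses is that the two actions are conjugate in the \emph{full} group of Liouville automorphisms of the $2$-atom $C_2$, which is strictly larger than $\Sym(C_2)$: for the symmetric embedding of Fig.~\ref{Fig:C2Sym} one can take a leafwise homeomorphism that interchanges the $x$- and $y$-axes (it reverses the sign of the function defining the foliation, hence does not lie in $\Sym(C_2)$, but it is still a Liouville equivalence of $C_2$ to itself); conjugating by it turns one rotation into the other, after which Assertion~\ref{A:EquivarAct2Atom} (or a direct check) finishes the identification of the two quotients. With this correction the argument is complete and coincides with the paper's.
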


\begin{remark} 
In other words, if we represent the $2$-atom $C_2$ as in Fig.~\ref{Fig:C2Sym}, then for one almost direct product
$(C_2 \times \cF_2)/\bbZ_2$ the group $\bbZ_2$ acts as a rotation by $\pi$ around the $z-$axis,
and for the other it acts as a rotation by $\pi$ around the $x-$ or $y-$axis.
\end{remark} 

\begin{proof}[\indent Proof of Theorem~\ref{Th:MainComp2}] 
Theorem~\ref{Th:MainComp2} is proved similarly to Theorem~\ref{Th:MainComp1}.
Let us consider only the case of almost direct products $(C_2 \times \cF_2)/\bbZ_2$. Represent the $2$-atom $C_2$ as in Fig.~\ref{Fig:C2Sym}.
The singularities indicated in the condition of the theorem are Liouville non-equivalent by Corollary~\ref{Cor:DiffFixPoints}. Singularities corresponding to the rotations by $\pi$ around the $x$ and $y$  axes are Liouville equivalent, because the actions are conjugate in the group of automorphisms for the $2$-atom $C_2$. Theorem~\ref{Th:MainComp2} is proved.\end{proof}

Singularities of saddle-focus type of complexity $3$ are classified similarly.
\begin{theorem} \label{Th:MainComp3} 
Any saddle-focus singularity of complexity $3$ is Liouville equivalent to exactly one of the following $21$ almost direct products:\begin{gather*}
B \times \mathcal{F}_3, \quad (B \times \mathcal{F}_6)/\bbZ_2, \quad E_1 \times \mathcal{F}_1, \quad (E_1 \times \mathcal{F}_2)/\bbZ_2, 
\quad (E_1 \times \mathcal{F}_3)/\bbZ_3, \quad  (E_1 \times \mathcal{F}_6)/\bbZ_6, \\
E_2 \times \mathcal{F}_1, \quad (E_2 \times \mathcal{F}_2)/\bbZ_2, \quad E_3 \times \mathcal{F}_1, \quad (E_3 \times \mathcal{F}_2)/\bbZ_2, 
\quad (E_3 \times \mathcal{F}_3)/\bbZ_2, \\
F_1 \times \mathcal{F}_1, \quad F_2 \times \mathcal{F}_1, \quad G_1 \times \mathcal{F}_1, \quad (G_1 \times \mathcal{F}_2)/\bbZ_2, \quad G_2 \times \mathcal{F}_1, \\
 \quad G_3 \times \mathcal{F}_1, \quad (G_3 \times \mathcal{F}_2)/\bbZ_2, \quad H_1 \times \mathcal{F}_1, \quad (H_1 \times \mathcal{F}_3)/\bbZ_3, \quad H_2 \times \mathcal{F}_1.
\end{gather*}  
\end{theorem}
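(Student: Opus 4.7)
The plan is to apply the algorithm from the preceding subsection, exactly as in the proofs of Theorems~\ref{Th:MainComp1} and~\ref{Th:MainComp2}. Since the divisors of $p=3$ are $m=1$ and $m=3$, the enumeration splits into two cases.

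For $m=1$ the only $2$-atom is $B$ with $\Sym(B)=\bbZ_2$, so $k\in\{1,2\}$ and we obtain the two singularities $B\times\cF_3$ and $(B\times\cF_6)/\bbZ_2$. For $m=3$ I would walk through each of the ten atoms $E_1,E_2,E_3,F_1,F_2,G_1,G_2,G_3,H_1,H_2$ and read off the cyclic subgroups of $\Sym(\cV_3)$ from Table~\ref{Tab:AtomSym}; since $d=p/m=1$, the simple minimal model takes the form $(\cV_3\times\cF_k)/\bbZ_k$. The counts are: $E_1$ ($\Sym=\bbZ_6$) contributes $4$ models corresponding to $k=1,2,3,6$; $E_3$ ($\Sym=S_3$, with cyclic subgroups of orders $1,2,3$) contributes $3$; each of $E_2$, $G_1$, $G_3$ (with $\Sym=\bbZ_2$) and $H_1$ ($\Sym=\bbZ_3$) contributes $2$; and each of $F_1,F_2,G_2,H_2$ with trivial symmetry contributes $1$. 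The total is $19$ models for $m=3$, and adding the two models for $m=1$ yields exactly the $21$ almost direct products listed in the theorem.

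The remaining task is to verify that these $21$ models are pairwise Liouville inequivalent. Theorem~\ref{Th:SimpleMinModelUnique} already separates any two simple minimal models whose triples $(\cV_m,\cF_n,\bbZ_k)$ differ, which handles every pair of distinct entries in the list. The genuine concern, and what I expect to be the main technical point, is the phenomenon already seen at complexity~$2$ for $(C_2\times\cF_2)/\bbZ_2$: a single triple $(\cV_m,\cF_n,\bbZ_k)$ may in principle give rise to several inequivalent simple minimal models, one for each $\Sym(\cV_m)$-conjugacy class of embeddings $\bbZ_k\hookrightarrow\Sym(\cV_m)$, distinguishable via invariants such as Corollary~\ref{Cor:DiffFixPoints}.

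I would rule this out at complexity~$3$ by inspection. Every atom of complexity~$3$ other than $E_3$ has cyclic symmetry group, and a cyclic group has a unique subgroup of each order dividing its own order, so the embedding $\bbZ_k\hookrightarrow\Sym(\cV_3)$ is essentially unique. The sole non-cyclic case is $\Sym(E_3)=S_3$, where the $\bbZ_3$ subgroup is the unique Sylow $3$-subgroup and the three $\bbZ_2$ subgroups form a single conjugacy class in $S_3$. By Assertion~\ref{A:EquivarAct2Atom}, conjugate actions produce Liouville equivalent almost direct products, so for $E_3$ as well each admissible value of $k$ yields precisely one model. This shows that the $21$-entry list is both exhaustive and irredundant, completing the proof.
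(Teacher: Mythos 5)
Your proposal is correct and follows exactly the approach the paper intends: the paper's own ``proof'' of Theorem~\ref{Th:MainComp3} is the single sentence that complexity~$3$ is ``classified similarly'' to Theorems~\ref{Th:MainComp1} and~\ref{Th:MainComp2}, and your enumeration over divisors $m\in\{1,3\}$, the atoms from Table~\ref{Tab:AtomSym}, and the cyclic subgroups of each $\Sym(\cV_3)$ (with the conjugacy argument via Assertion~\ref{A:EquivarAct2Atom} handling the only non-cyclic case $\Sym(E_3)=S_3$) is precisely the intended application of the algorithm. Note that your count correctly produces $(E_3\times\cF_3)/\bbZ_3$ for the $k=3$ action on $E_3$, whereas the theorem as printed says $(E_3\times\cF_3)/\bbZ_2$ --- a model that cannot exist (it would have non-integer complexity $9/2$), so this is evidently a typo in the statement that your argument implicitly corrects.
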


\label{end}

\end{document}